\newtheorem{theorem}{Theorem}[section]
\newtheorem{lemma}[theorem]{Lemma}
\newtheorem{proposition}[theorem]{Proposition}
\theoremstyle{definition}
\numberwithin{equation}{section}
\newcommand\R{\mathbb{R}}
\newcommand\Z{\mathbb{Z}}
\newcommand\C{\mathbb{C}}
\newcommand{\CP}{\mathcal{P}}
\newcommand{\vdelta}{\underline{\delta}}
\newcommand{\wh}{\widehat}
\begin{document}
\title{Density Ternary Goldbach for primes in a fixed residue class}

\author[Alsetri]{Ali Alsetri}
\address{Department of Mathematics, University of Kentucky\\
715 Patterson Office Tower\\
Lexington, KY 40506\\
USA}
\email{alialsetri@uky.edu}



\begin{abstract}
We prove that if $A$ is a subset of those primes which are congruent to $1 \pmod{3}$ such that the relative density of $A$ in this residue class is larger than $\frac{1}{2},$ then every sufficiently large odd integer $n$ which satisfies $n \equiv 0 \pmod{3}$ can be written as a sum of three primes from $A.$ Moreover the threshold of $\frac{1}{2}$ for the relative density is best possible.

\end{abstract}
\maketitle

\section{Introduction}
This article falls under the general theme of representations of integers as sums of primes. The famous binary Goldbach conjecture states that every even integer $n \geq 4$ can be written as the sum of two primes. While the binary Goldbach problem is still out of reach, the ternary Goldbach problem which studies representations of odd integers as sums of three primes has now been solved. Vinogradov \cite{Vinogradov} showed in $1937$ that every sufficiently large odd integer is a sum of three primes, and in $2015$ Helfgott \cite{Helfgott} showed this holds for every odd integer $n \geq 7.$

There has recently been an effort to obtain density versions of the aforementioned Goldbach-type results. In $2005$ Green \cite{Green} established a version of Roth's theorem for the primes; he proved that any subset of the primes with positive relative density contains a three term arithmetic progression. In this paper he developed what has become known as a transference principle, which allows one to transfer certain results which hold for dense subsets in the integers to relatively dense subsets of the primes. Variants of Green's transference principle have been developed to allow for density versions of Goldbach-type problems. Given a subset $A \subset \mathbb{P}$ of the primes define its lower density in the primes to be $$\underline{\delta_{\mathbb{P}}}(A) = \liminf_{N \rightarrow \infty} \frac{|A\cap \{1,2,\dots,N\}|}{|\mathbb{P} \cap \{1,2,\dots,N\}|}.$$ Li and Pan \cite{LiPan} developed a variant of Green's transference principle suitable for ternary Goldbach-type problems and used it to prove a density version of Vinogradov's theorem. Namely they showed that if $A,B,C \subset \mathbb{P}$ such that $\underline{\delta_{\mathbb{P}}}(A) + \underline{\delta_{\mathbb{P}}}(B) + \underline{\delta_{\mathbb{P}}}(C) > 2,$ then every sufficiently large odd integer $n$ can be written as a sum $n = a + b + c$ with $a \in A,b \in B$ and $c \in C.$ Shao \cite{Shao}, using Li and Pan's variant of the transference principle, improved their result in the case when the three primes belong to the same subset of the primes. He proved that if $A \subset \mathbb{P}$ such that $\underline{\delta_{\mathbb{P}}}(A) > \frac{5}{8}$ then every sufficiently large odd integer $n$ can be written as a sum  $n = a_1 + a_2 + a_3$ with $a_1,a_2,a_3 \in A.$ Moreover he showed that the threshold $\frac{5}{8}$ was sharp. Shen \cite{Shen} showed Shao's result continues to hold for three sets $A,B,C \subset \mathbb{P}$ where the lower density of one is greater than $\frac{5}{8}$ and the lower densities of the other two are at least $\frac{5}{8}.$ For other applications of the transference principle to Vinogradov type results for sparse subsets of the primes see \cite{Shao-Kaisa-Joni,Shao-Kaisa, Grimmelt}. For applications of the transference principle to binary Goldbach type problems see \cite{ChipeniukHamel,Joni,Matomaki,Alsetri-Shao}, and for a general survey of the transference principle see \cite{Prendiville}.

In this paper we prove a density version of a variant of Vinogradov's theorem in which the primes in our subset belong to the residue class $1 \pmod{3}.$ Sums of three such primes must be multiples of $3$, and so we seek to represent every sufficiently large odd integer $n$ such that $n \equiv 0 \pmod{3}$ as a sum of three primes from our subset. We will now precisely state our main result.
\begin{theorem}\label{density-prime-1/2}
Let $\mathcal{P}_{1 , 3}$ denote the primes congruent to $1$ modulo $3$, and let $A \subset \mathcal{P}_{1,3}$ satisfy $\underline{\delta}(A) > \frac{1}{2}$ where $\underline{\delta}(A) = \liminf_{N \rightarrow \infty}{\frac{|A \cap [1,N]|}{|\mathcal{P}_{1,3} \cap [1,N]|}}$. Then $A+A+A$ contains every sufficiently large odd integer $n$ which satisfies $n \equiv 0 \pmod{3}$.
\end{theorem}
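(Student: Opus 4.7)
The strategy is to follow the Fourier-analytic framework of Shao \cite{Shao}, which applies the transference principle of Li and Pan \cite{LiPan}, but now specialized to primes in $\mathcal{P}_{1,3}$. The key simplification in this restricted setting is that the combinatorial heart of the problem becomes a one-line pigeonhole at density $\frac{1}{2}$, rather than the more intricate $\frac{5}{8}$ threshold that appears in Shao's setup.

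First, I would set up the $W$-trick. Fix a large odd $n$ with $n \equiv 0 \pmod 3$, let $N \sim n$, let $w(N) \to \infty$ very slowly, and put $W = \prod_{p \leq w} p$. For each $b \in (\Z/W\Z)^*$ with $b \equiv 1 \pmod 3$, define the normalized counting function
\[
f_{A,b}(x) = \frac{\phi(W)}{W}\, \log(Wx+b)\, \mathbf{1}_A(Wx+b), \qquad x \in \{1,\ldots,M\},
\]
where $M = N/W$. By the prime number theorem in arithmetic progressions, $\E_x f_{\mathcal{P}_{1,3},b}(x) = 1 + o(1)$, and the hypothesis $\underline{\delta}(A) > \frac{1}{2}$ translates into the averaged condition $\E_b\, \E_x\, f_{A,b}(x) > \frac{1}{2} + \eta$ for some $\eta > 0$ once $N$ is large. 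Standard restriction-type estimates for exponential sums over primes in arithmetic progressions (in the spirit of Bourgain and Green--Tao) provide, for each such $b$, a Hardy--Littlewood pseudorandom majorant $\nu_b \geq f_{A,b}$ of the quality required as input to Li--Pan.

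Next, the Li--Pan transference principle produces bounded model functions $g_b : \Z/M\Z \to [0, 1+o(1)]$ with $\E g_b = \E f_{A,b} + o(1)$ and $\|\widehat{g_b} - \widehat{f_{A,b}}\|_\infty = o(1)$, such that the weighted number of representations of $n$ as $p_1 + p_2 + p_3$ with $p_i \in A$ is, up to relative error $o(1)$, a positive multiple of
\[
\sum_{\substack{b_1,b_2,b_3 \equiv 1\, (3) \\ b_1+b_2+b_3 \equiv n\, (W)}} (g_{b_1} * g_{b_2} * g_{b_3})\!\left(\tfrac{n - b_1 - b_2 - b_3}{W} \bmod M\right).
\]
The combinatorial core is now immediate: by averaging, some $b$ has $\E g_b > \frac{1}{2}$, and then the support $S_b = \{g_b > 0\}$ satisfies $|S_b| > M/2$, which by a one-line pigeonhole ($|S_b| + |x - S_b| > M$ for every $x$) gives $S_b + S_b = \Z/M\Z$, and hence $S_{b_1} + S_{b_2} + S_{b_3} = \Z/M\Z$ as soon as one of the $S_{b_i}$ exceeds density $\frac{1}{2}$. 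Flexibility in choosing the $b_i$ (using $n \equiv 0 \pmod 3$ together with $b_i \equiv 1 \pmod 3$) yields an admissible triple at which the convolution is positive, so transferring back produces $f_{A,b_1} * f_{A,b_2} * f_{A,b_3} > 0$ at the target, i.e.\ $n \in A+A+A$.

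The main obstacle is the careful verification of the transference principle in the restricted setting. Concretely, one must establish that the exponential sum restriction estimate
\[
\Big\| \sum_{\substack{p \leq N \\ p \equiv b\, (W)}} (\log p)\, e(p\xi) \Big\|_{L^q([0,1])}
\]
is of the same strength uniformly in $b$ as for the unrestricted primes, and that the pseudorandom majorant can be normalized so that the condition $\underline{\delta}(A) > \frac{1}{2}$ passes cleanly to $\E g_b > \frac{1}{2}$ for a positive proportion of $b$. Once these ingredients are in place, the remaining combinatorial step is elementary, and the threshold $\frac{1}{2}$ is sharp: the extremal example $A = \mathcal{P}_{1,3} \cap (n/2, n]$ has relative density approaching $\frac{1}{2}$, while $A + A + A \subset (3n/2, 3n]$ fails to contain $n$.
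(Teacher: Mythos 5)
Your overall framework (the $W$-trick, pseudorandom majorants, a Green/Li--Pan/Shao-type transference, then a combinatorial step modulo $W$) is the right shape and matches the paper's, but the ``combinatorial core'' as you state it has a genuine gap, and it is exactly the gap that the paper's main technical result (Theorem \ref{local-result}) is designed to fill. The transference principle (Proposition \ref{transference}) does not merely need the three transferred functions to have nonempty support: it needs the mean condition \eqref{mean}, i.e.\ $\delta_i\ge\delta$ for each $i$ \emph{and} $\delta_1+\delta_2+\delta_3-1\ge\delta$, because the conclusion is a lower bound $g_{b_1}*g_{b_2}*g_{b_3}(x)\gg_\delta 1$ that must survive the $o(1)$ error when transferring back; positivity of the model convolution coming from a support-level pigeonhole can be of size $o(1)$ and is useless. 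Your argument supplies one class $b_1$ with density $>\tfrac12$ and asks only that $S_{b_2},S_{b_3}$ be nonempty, so $\delta_2,\delta_3$ can be arbitrarily small (or zero) and $\delta_1+\delta_2+\delta_3$ need not exceed $1$. What is actually required is the existence of reduced classes $b_1,b_2,b_3\pmod W$, all $\equiv 1\pmod 3$, with $b_1+b_2+b_3\equiv n\pmod W$, each carrying a positive proportion of $A$ and with the three normalized densities summing to more than $\tfrac32$. Producing such a triple from the bare hypothesis $\underline{\delta}(A)>\tfrac12$ is not a one-line pigeonhole; it is the content of Theorem \ref{local-result}, whose proof (Sections \ref{sec2}--\ref{sec3}) needs the analogues of Shao's Lemmas 2.1--2.2, a Cauchy--Davenport--Chowla argument in $\Z_{15}^*$, and a linear-programming analysis, and whose threshold $\varphi(m)/4$ is itself sharp.

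The same issue shows why your appeal to ``flexibility in choosing the $b_i$'' cannot be waved through: a set $A$ concentrated on the primes $\equiv 1,7\pmod{15}$ misses entire admissible residue classes mod $W$, so for many congruence-compatible triples $(b_1,b_2,b_3)$ one of the supports is empty, and one must argue (as the paper does via the structure mod $15$) that a \emph{good} triple exists whenever the global density exceeds $\tfrac12$. Relatedly, your closing sharpness example is not correct for this theorem: $A=\mathcal{P}_{1,3}\cap(n/2,n]$ is tied to a single $n$ and does not give a fixed set of lower relative density $\tfrac12$; the genuine extremal obstruction is congruential, namely $A\subset\{p:\ p\equiv 1\ \text{or}\ 7\pmod{15}\}$, for which $A+A+A$ omits every integer $\equiv 12\pmod{15}$, exactly the example given after Theorem \ref{local-result}.
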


Our strategy in this paper follows the strategy adopted by Shao \cite{Shao}. To facilitate his use of the transference principle he develops a local result, Proposition $1.4$ in \cite{Shao}, which is the key technical tool in his paper. It implies that if $m$ is an odd squarefree integer and $A \subset \Z_m^*$, then $A+A+A$ contains all residue classes modulo $m$ provided $|A| > \frac{5}{8}\phi(m).$ This is Corollary $1.5$ in \cite{Shao}, it follows immediately from the more quantitative Proposition $1.4$ preceding it.  We will develop an analog of his local result which will allow us to deduce our main theorem using a Fourier analytic transference principle. We will now state our local result.

\begin{theorem}\label{local-result}

Let $m$ be an odd squarefree integer which satisfies $m \equiv 0 \pmod{3}.$ Let $f:\Z_{m}^* \rightarrow [0,1]$ be a function supported on those reduced residue classes which are $1 \pmod3$ which satisfies $$\sum_{a \in \Z_m^*}f(a) > \frac{\varphi(m)}{4}.$$ Then for any $x \in \Z_m$ for which $x \equiv 0 \pmod 3$, there exist $a_1,a_2,a_3 \in \Z_m^*$ such that $$f(a_i) > 0, f(a_1) + f(a_2) + f(a_3) > \frac{3}{2}.$$

\end{theorem}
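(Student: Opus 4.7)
Write $m = 3M$ where $M$ is odd squarefree coprime to $3$. The Chinese Remainder Theorem gives $\Z_m^* \cong \Z_3^* \times \Z_M^*$, and the hypothesis that $f$ is supported on residues $\equiv 1 \pmod 3$ means $f$ is supported on $\{1\} \times \Z_M^*$. Define $g : \Z_M^* \to [0,1]$ by $g(b) = f((1,b))$; since $\varphi(m) = 2\varphi(M)$, the density hypothesis becomes $\sum_{b \in \Z_M^*} g(b) > \varphi(M)/2$, and for $x = (0,y)$ with $y \in \Z_M$ the target conclusion becomes: there exist $b_1,b_2,b_3 \in \Z_M^*$ with $b_1+b_2+b_3 \equiv y \pmod M$, $g(b_i) > 0$ for each $i$, and $g(b_1)+g(b_2)+g(b_3) > 3/2$.

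\textbf{Three-function strengthening and CRT induction.} To make a CRT induction go through, I prove the following stronger statement by induction on the number of prime factors of $M$ (still squarefree and coprime to $3$): if $g_1,g_2,g_3 : \Z_M^* \to [0,1]$ have densities $\rho_i := (\sum g_i)/\varphi(M)$ satisfying each $\rho_i > 0$ and $\rho_1+\rho_2+\rho_3 > 3/2$, then for every $y \in \Z_M$ there exist $b_i \in \Z_M^*$ summing to $y$ with $g_i(b_i) > 0$ and $\sum g_i(b_i) > 3/2$. The required theorem is the case $g_1=g_2=g_3=g$ with each $\rho_i > 1/2$, and the base case $M=1$ is trivial. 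For the inductive step write $M = pM'$ with $p$ a prime not dividing $3M'$, decompose each $b \in \Z_M^*$ as $(b',c) \in \Z_{M'}^* \times \Z_p^*$, and form the slice functions $g_{i,c}(b') := g_i((b',c))$ and the $\Z_p^*$-marginals $A_i(c) := (\sum_{b'} g_{i,c}(b'))/\varphi(M') \in [0,1]$. Each $A_i$ has average $\rho_i$ over $\Z_p^*$, so $A_1,A_2,A_3$ satisfy the same three-function density hypothesis on $\Z_p^*$; applying the prime base case (below) to these marginals produces $c_1,c_2,c_3 \in \Z_p^*$ summing to the $\Z_p$-projection $\gamma$ of $y$ with $A_i(c_i) > 0$ and $\sum A_i(c_i) > 3/2$. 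The three slice functions $g_{1,c_1}, g_{2,c_2}, g_{3,c_3}$ on $\Z_{M'}^*$ now satisfy the inductive hypothesis at level $M'$ with densities $A_i(c_i)$ summing to $>3/2$, so induction supplies $b'_i$ summing to the $\Z_{M'}$-projection $y'$ of $y$ with $g_{i,c_i}(b'_i) > 0$ and $\sum g_{i,c_i}(b'_i) > 3/2$; setting $b_i := (b'_i,c_i)$ gives the desired triple.

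\textbf{Prime base case and main obstacle.} For $M = p$ prime (necessarily $p \ge 5$, odd and coprime to $3$), let $T_\alpha^{(i)} := \{b \in \Z_p^* : g_i(b) \ge \alpha\}$ with $F_i(\alpha) := |T_\alpha^{(i)}|$. Iterated Cauchy--Davenport in $\Z_p$ gives $T_{\alpha_1}^{(1)} + T_{\alpha_2}^{(2)} + T_{\alpha_3}^{(3)} = \Z_p$ as soon as $F_1(\alpha_1)+F_2(\alpha_2)+F_3(\alpha_3) \ge p+2$, and any representation $b_1+b_2+b_3 \equiv y \pmod p$ with $b_i \in T_{\alpha_i}^{(i)}$ then gives $\sum g_i(b_i) \ge \sum \alpha_i$. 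So it is enough to produce $\alpha_i > 0$ with $\sum \alpha_i > 3/2$ and $\sum F_i(\alpha_i) \ge p+2$; this is where the hypothesis $\rho_1+\rho_2+\rho_3 > 3/2$ enters, via the identity $\int_0^1 F_i(\alpha)\,d\alpha = \rho_i(p-1)$. The main obstacle is executing this selection cleanly in extremal configurations: in the easy case, a uniform threshold $\alpha > 1/2$ already has $\sum F_i(\alpha) \ge p+2$ and we take $\alpha_1=\alpha_2=\alpha_3=\alpha$; in the hard case one exploits the fact that the density shortfall forces at least one $g_i$ to put substantial mass on values well above $1/2$, so that a suitably chosen non-uniform triple $(\alpha_1,\alpha_2,\alpha_3)$ simultaneously clears both constraints. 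Handling possibly uneven supports and magnitudes across the three functions in this case analysis is the main technical difficulty, as is verifying that the density inequality $\sum \rho_i > 3/2$ is exactly tight for the Cauchy--Davenport bound $p+2$.
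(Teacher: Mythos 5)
Your CRT reduction to a function $g$ on $\Z_M^*$ with $\sum_b g(b) > \varphi(M)/2$ is fine, but the engine of your argument — the ``three-function strengthening'' asserting that $\rho_i>0$ for each $i$ together with $\rho_1+\rho_2+\rho_3>\tfrac32$ already forces, for every $y$, a representation $y=b_1+b_2+b_3$ with $g_i(b_i)>0$ and $g_1(b_1)+g_2(b_2)+g_3(b_3)>\tfrac32$ — is false, and it is false already at the prime level that your induction would both prove (base case) and invoke (for the marginals $A_i(c_i)$, which are only guaranteed to satisfy exactly these linear conditions). Take $M=p=7$ (a legitimate modulus, arising e.g.\ from $m=21$), and set $g_1(1)=\tfrac1{20}$ and $g_1=0$ elsewhere, $g_2\equiv 1$ on $\Z_7^*$, and $g_3(1)=1$, $g_3(b)=\tfrac9{20}$ for $b\in\Z_7^*\setminus\{1\}$. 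Then $\rho_1=\tfrac1{120}>0$, $\rho_2=1$, $\rho_3=\tfrac{13}{24}$, so $\rho_1+\rho_2+\rho_3=\tfrac{31}{20}>\tfrac32$. Yet for $y=2$ every admissible triple must have $b_1=1$, hence $b_2+b_3=1$; the choice $b_3=1$ forces $b_2=0\notin\Z_7^*$, so necessarily $g_3(b_3)=\tfrac9{20}$ and the total is $\tfrac1{20}+1+\tfrac9{20}=\tfrac32$, never strictly larger. The same configuration also defeats your threshold scheme in the base case: since $g_1$ is supported at a single point, $F_1(\alpha_1)\le 1$ for every $\alpha_1>0$, and no thresholds with $\alpha_1+\alpha_2+\alpha_3>\tfrac32$ and $F_1(\alpha_1)+F_2(\alpha_2)+F_3(\alpha_3)\ge p+2$ exist. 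So the ``main obstacle'' you flag is not merely a technical case analysis left to do; no argument can close it from the linear hypothesis alone, because one density may be nearly zero while the others compensate only additively.

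This is precisely why the paper (following Shao) transports a \emph{quadratic} condition, not a bound on the sum, through the CRT decomposition: Proposition~\ref{local-prop1} produces $a_1,a_2,a_3$ with $f'(a_1)f'(a_2)+f'(a_2)f'(a_3)+f'(a_3)f'(a_1)>\tfrac12\bigl(f'(a_1)+f'(a_2)+f'(a_3)\bigr)$, an inequality that cannot hold when one of the values is near zero (if one value is $0$, it would force $xy>\tfrac12(x+y)\ge\sqrt{xy}$ with $x,y\in[0,1]$, impossible) and that is indeed violated by the triple $(\tfrac1{120},1,\tfrac{13}{24})$ above; this quadratic information is exactly the hypothesis \eqref{F1F2F3} needed by Proposition~\ref{local-prop2} at the modulus $15$, where the mod~$3$ support condition and Cauchy--Davenport--Chowla in $\Z_5^*$ finish the job. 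To repair your approach you would have to replace your linear inductive invariant by such a quadratic one (which is what Shao's Lemmas~2.1--2.2 and Propositions~3.1--3.2, adapted in Sections~\ref{sec2}--\ref{sec3}, are for); as written, both the inductive step and the prime base case fail.
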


This result is sharp. Let $A = \{1,7\} \subset \Z_{15}^*$, then the residue class $12 \pmod{15} \notin A+A+A.$ Therefore the corresponding function $f:\Z_{15}^* \rightarrow [0,1]$ which is defined by $$f(x)=
\begin{cases}
 1&\text{if }  x \in A \\
      0 &\text{otherwise}
\end{cases}
$$
has its average $\sum_{x \in \Z_{15}^*}f(x) = \frac{\phi(15)}{4}$ but the conclusion of the theorem fails upon taking $x = 12\pmod{15}.$

This article is organized as follows. In Section \ref{sec2} we prove some key lemmas which are needed for the proof of our local result \ref{local-result}. In Section \ref{sec3} we prove our local result \ref{local-result} which we deduce from the two propositions \ref{local-prop1} and \ref{local-prop2}. Then in Section \ref{sec4} we include the variant of the transference principle which we need in the form used by Shao in \cite{Shao}. Finally we prove our main result in Section \ref{sec5} using our local result and the transference principle.

\section{Lemmas needed to prove local result}\label{sec2}

\begin{lemma}\label{symmetric-lemma}
For even $n \geq 6$, let $a_0 \geq a_1 \geq \dots \geq a_{n-1}$ be a decreasing sequence of numbers in $[0,1]$. Let $A$ denote their average $A = \frac{1}{n}\sum_{j=0}^{n-1}a_j$. Suppose that for all triples $(i,j,k)$ with $0 \leq i,j,k \leq n-1$ and with $i + j + k \geq n$ we have $$a_ia_j + a_ja_k + a_ka_i \leq \frac{1}{2}(a_1 + a_2 + a_3).$$ Then the average value $A$ is bounded by $\frac{1}{2}.$
\end{lemma}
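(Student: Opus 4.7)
I would argue by contradiction. Assume $A > 1/2$ and aim to derive a violation of the hypothesis. The natural device, given that $n = 2m$ is even, is a \emph{pairing} argument: pair index $j$ with $n-1-j$ for $j = 0, 1, \ldots, m-1$ and track the pair sums $t_j := a_j + a_{n-1-j}$. The assumption $A > 1/2$ rewrites as $\sum_{j=0}^{m-1} t_j = nA > m$, so the pair sums average to strictly more than $1$.

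The workhorse is the triple $(j, n-1-j, 1)$, which has index sum exactly $n$ and is therefore eligible. Writing $s := a_1 + a_2 + a_3$, the hypothesis gives, for every such $j$,
\[
a_j a_{n-1-j} + a_1 t_j \le \tfrac{s}{2}.
\]
Combining this with the elementary inequality $xy \ge x + y - 1$ for $x, y \in [0,1]$ (so that $a_j a_{n-1-j} \ge (t_j - 1)_+$) and summing over $j$ yields
\[
(1 + a_1)\, nA \;\le\; m\bigl(\tfrac{s}{2} + 1\bigr), \qquad \text{i.e.,} \qquad A \;\le\; \frac{s + 2}{4(1 + a_1)}.
\]
Together with $s \le 3 a_1$ this already delivers an intermediate $A \le 5/8$, in the spirit of the analogous bound obtained in Shao's Lemma~2.1.

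To sharpen from $5/8$ down to $1/2$, I would invoke further triples to control $a_1$ and the tail values. The triple $(1, 1, n-2)$ gives $a_1 \le \sqrt{s/2}$; the ``diagonal'' triple $(\lceil n/3 \rceil, \lceil n/3 \rceil, \lceil n/3 \rceil)$ (which is valid because $n \ge 6$, so its index sum is at least $n$) gives $a_{\lceil n/3 \rceil} \le \sqrt{s/6}$ and hence controls $a_j$ for all $j$ in the upper tail; and variants such as $(2, 2, n-4)$, $(1, 2, n-3)$, and $(0, 1, n-1)$ constrain further individual entries. A case split on whether $a_2 + a_3 \le a_1$ or $a_2 + a_3 > a_1$ then lets these bounds combine: in the former regime (equivalently $s \le 2 a_1$), the pair-sum argument alone already forces each $t_j \le 1$ and hence $A \le 1/2$; in the latter ``triangle'' regime, $a_1, a_2, a_3$ are comparable and one must couple several of the auxiliary inequalities.

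The hardest part will be the triangle case $a_2 + a_3 > a_1$. There the simple pair-sum argument only yields $A \le 5/8$, and reducing the bound further to $1/2$ requires exploiting the sharper factor $\tfrac12$ in our hypothesis (as opposed to the analogous constant in Shao's setting) to couple the auxiliary bounds on $a_1$ and on the tail values into the pair-sum estimate. This is where the precise numerical constants must align, and it is the technical heart of the proof.
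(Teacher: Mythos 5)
Your proposal stops exactly where the content of the lemma begins. The pairing argument with the triples $(1,j,n-1-j)$ and the inequality $a_ja_{n-1-j}\ge t_j-1$ does yield a bound of the $\frac{5}{8}$ type, but that is Shao's regime; the whole point here is the improvement to $\frac{1}{2}$, and your plan explicitly defers that step to an unspecified coupling of auxiliary inequalities, with no evidence that the triples you list suffice. In particular nothing in the sketch engages the base case $n=6$, which is where the genuine difficulty sits: the paper, after the substitution $x_i=4a_i-1$ (under which the hypothesis becomes $x_ix_j+x_jx_k+x_kx_i\le 3$), eliminates $x_2,x_3,x_5$ using the triples $(2,2,2)$, $(0,2,4)$, $(0,3,3)$, $(0,1,5)$, together with $(0,4,4)$, $(1,1,4)$, and then verifies eight explicit two-variable inequalities $f_i(x,y)\le 6$ over eight regions --- a careful multivariable optimization that your bounds $a_1\le\sqrt{s/2}$, $a_{\lceil n/3\rceil}\le\sqrt{s/6}$ do not replace. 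For $n\ge 8$ the paper adapts Shao's half-sum argument with $S_0,S_1$ and a quadratic-form contradiction; your sketch has no analogue of this either, and you yourself concede that the pair-sum estimate alone cannot close the \emph{triangle} case $a_2+a_3>a_1$. So the decisive part of the proof is missing.

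A second, more local issue: you read the hypothesis with the fixed right-hand side $\frac{1}{2}(a_1+a_2+a_3)$. That is what the statement literally prints, but it is evidently a typo for $\frac{1}{2}(a_i+a_j+a_k)$: the paper's proof converts the condition into $x_ix_j+x_jx_k+x_kx_i\le 3$, which only the latter form gives, and the asymmetric Lemma \ref{asymmetric-lemma} has the triple-dependent right-hand side $\frac{1}{2}(a_i+b_j+c_k)$. Several of your intermediate steps depend on the literal reading and change under the intended one: the pair inequality becomes $a_ja_{n-1-j}+a_1t_j\le\frac{1}{2}(a_1+t_j)$, so $t_j\le\frac{2+a_1}{1+2a_1}$, and in particular the claim that $s\le 2a_1$ already forces $t_j\le 1$ no longer holds; the triple $(1,1,n-2)$ gives $a_1^2+2a_1a_{n-2}\le a_1+\frac{1}{2}a_{n-2}$ rather than $a_1\le\sqrt{s/2}$; and the diagonal triple gives $a_{\lceil n/3\rceil}\le\frac{1}{2}$ directly. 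Thus even the portion of the argument you do carry out would need to be redone against the hypothesis the paper actually uses, and the reduction from $\frac{5}{8}$ to $\frac{1}{2}$ --- the heart of the lemma --- is absent.
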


\begin{proof}
Using the transformation $x_i = 4a_i - 1$, we have a new decreasing sequence $x_0 \geq x_1 \geq \dots \geq x_{n-1}$ in $[-1,3]$. Let $X$ denote the average value $X = \frac{1}{n}\sum_{j=0}^{n-1}x_j$, and observe that it suffices to show $X \leq 1.$ Moreover for all triples $(i,j,k)$ with $0 \leq i,j,k \leq n-1$ and with $i + j + k \geq n$ we have \begin{equation}
    x_ix_j + x_jx_k + x_kx_i \leq 3. \label{sym-x}
\end{equation} 
We first treat the case $n = 6$. We want to show $$x_0 + x_1 + x_2 + x_3 + x_4 + x_5 \leq 6.$$ We may assume that $x_0 > 1$, since otherwise $X \leq x_0 \leq 1$ and we are done. Hence $x_0 + x_i > 0$ for each $i = 0,\dots,n-1.$ Observe we have the following inequalities, using the relations associated to the triples of indices. 
\begin{align*}
    x_2 &\leq \min(1,\frac{3 - x_0x_4}{x_0 + x_4}) \quad \textrm{using} \quad (2,2,2),(0,2,4). \\
    x_3 &\leq \sqrt{3 + x_0^2} - x_0  \quad \textrm{using} \quad (0,3,3). \\
    x_5 &\leq \frac{3 - x_0x_1}{x_0 + x_1} \quad \textrm{using} \quad (0,1,5).
\end{align*}

Hence letting $x = x_0, y = x_1, z = x_4$ it suffices to show $g(x,y,z) \leq 6$ where $$g(x,y,z) = y + z + \min(1,\frac{3-xz}{x+z}) + \frac{3 - xy}{x+y} + (3+x^2)^{\frac{1}{2}},$$ for $x \geq y \geq z$ belonging to $[-1,3]$. Observe additionally that $z$ satisfies $$z \leq 
\min((3+x^2)^{\frac{1}{2}} - x, \frac{3-y^2}{2y}),$$ due to the triples $(0,4,4),(1,1,4).$ 
Now we split into cases based on the range of values taken by $x,y,z$. Note without loss of generality we may assume $x \geq 1$ and $y \geq 0.6.$ Observe on each of the following regions we can bound $g(x,y,z)$ above by the corresponding expressions involving $x$ and $y$.
\begin{align*}
    g(x,y,z) &\leq 1 + 2y + \sqrt{3 + x^2} + \frac{3 - xy}{x+y}, \quad  1 \leq x \leq 2, 0.6 \leq y \leq 1. \\
     g(x,y,z) &\leq 1 + y + 2\sqrt{3 + x^2} - x + \frac{3 - xy}{x+y}, \quad 2 \leq x \leq 3, 0.6 \leq y \leq 1.\\
     g(x,y,z) &\leq  1 + y + \sqrt{3 + x^2} + \frac{3 - xy}{x+y} + \frac{3 - y^2}{2y}, \quad 1 \leq x \leq 2, 1 \leq y \leq x.\\
    g(x,y,z) &\leq 1 + y + 2\sqrt{3 + x^2} - x + \frac{3 - xy}{x+y}, \quad 2 \leq x \leq 2.5, 1 \leq y \leq 1.7.\\
     g(x,y,z) &\leq  1 + y + \sqrt{3 + x^2} + \frac{3 - xy}{x+y} + \frac{3- y^2}{2y}, \quad 2 \leq x \leq 2.5, 1.7 \leq y \leq x.\\
     g(x,y,z) &\leq 1 + y + \sqrt{3 + x^2} + \frac{3 - xy}{x+y} + \frac{3- y^2}{2y}, \quad 2.5 \leq x \leq 3, 1.4 \leq y \leq x.\\
     g(x,y,z) &\leq  y + \sqrt{3 + x^2} + \frac{3 - xy}{x+y} + 1.4, \quad 2.5 \leq x \leq 3, 1 \leq y \leq 1.4, \text{ and } z \leq 0.4.\\
     g(x,y,z) &\leq  y + 2\sqrt{3 + x^2} - x + \frac{3 - xy}{x+y} + \frac{3-x(0.4)}{x + 0.4}, \quad 2.5 \leq x \leq 3, 1 \leq y \leq 1.4, z \geq 0.4.
\end{align*}

Let $f(x,y)$ be the expression which bounds $g(x,y,z)$ on each of the eight regions above. One may check by hand that $f(x,y) \leq 6$ on each region by showing there is no critical point in the interior of the region and maximizing along the boundary. Alternatively one may use, say, Wolfram Alpha and verify that $f(x,y) \leq 6$ on each of the eight regions. Hence we are done with the $n = 6$ case of the lemma.

The remaining $n \geq 8$ case follows the argument of Shao in Lemma $2.1$ in \cite{Shao}.

Let $n = 2m$ for $m \geq 4$ and define 

\[
S_0 = \sum_{i = 0}^{m-1}x_i \quad \textrm{and} \quad S_1 = \sum_{i = m}^{n-1}x_i.
\]

Assume $S_0 + S_1 > 2m$ for a contradiction. Consider the quantity $S_0^2 + 2S_0S_1$; the terms in this sum can be grouped into expressions of the form $x_ix_j + x_jx_k + x_kx_i$ with $0 \leq i,j \leq m-1$, and $m \leq k \leq 2m-1$ such that $i + j + k \equiv 0 \pmod{m}.$ Observe that there are $m^2$ such expressions since for each pair $0 \leq i,j \leq m-1$ there is a unique $k$ satisfying $m \leq k \leq 2m-1$ such that $i + j + k \equiv 0 \pmod{m}.$ Therefore $$S_0^2 + 2S_0S_1 = \sum_{i + j + k \equiv 0 \pmod m}x_ix_j +x_jx_k + x_kx_i.$$ Moreover every term, except for the term corresponding to $(i,j,k) = (0,0,m)$, has its indices sum to at least $n$. Applying \eqref{sym-x}, we find that 

\begin{equation}
    S_0^2 + 2S_0S_1 \leq 3(m^2 -1) + x_0^2 + 2x_0x_m \leq 3m^2 + (x_0^2 - x_m^2) \label{S-sym}
\end{equation} where the final inequality follows since $x_m^2 + 2x_0x_m \leq 3$ due to the triple $(m,m,0)$ and the corresponding relation \eqref{sym-x}.

Now observe that relation \eqref{sym-x} corresponding to the triple $(m-1,m-1,m-1)$ yields $x_{m-1}^2 \leq 1.$ Hence $x_{m-1} \leq 1$ and so $S_0 \leq 3m  - 2.$ Combining this with our assumption that $S_0 + S_1 > 2m$ it follows that $S_1 > -m +2.$ 

First suppose $S_1 \leq 0$, and so $$4m^2 < (S_0+S_1)^2 \leq (S_0^2 + 2S_0S_1)  + S_1^2 \leq 3m^2 + 9 + (m-2)^2,$$ where we used $x_0^2 - x_m^2 \leq 9$. This implies $0 < 13 - 4m$ which is a contradiction for $m \geq 4.$ Now let $S_1 > 0.$ It follows that $x_m > 0$ and $mx_m \geq S_1.$ Hence $$4m^2 < (S_0 + S_1)^2 \leq 3m^2 + (x_0^2 - x_m^2) + m^2x_m^2,$$ by \eqref{S-sym}. 

Rearranging we obtain \begin{equation}
    x_0^2 > (m^2-1)(1-x_m^2) +1. \label{l-b x_0^2}
\end{equation} Using the relation \eqref{sym-x} corresponding to the triple $(0,m,m)$ we have the inequality $$x_0x_m \leq \frac{3-x_m^2}{2} \leq 1 + \frac{1-x_m^2}{2}.$$ Squaring we obtain $$(x_0x_m)^2 \leq 1 + (1-x_m^2) + \frac{(1-x_m^2)^2}{4}.$$ Combining this with the lower bound \eqref{l-b x_0^2}, and using $x_m^2 \leq 1$ which follows from the relation \eqref{sym-x} corresponding to the triple $(m,m,m)$, we obtain the inequality $$x_m^2 < \frac{9}{4m^2 - 3}.$$ Hence for $m \geq 4$, $$x_m^2 \leq \frac{9}{4m^2-3}.$$ Therefore by \eqref{l-b x_0^2}, we obtain $$x_0^2 > (m^2 - 1)(1 - \frac{9}{4m^2-3}) + 1.$$ which contradicts $x_0 \leq 3$ for all $m \geq 4$.
\end{proof}
We now state an asymmetric variant of Lemma \ref{symmetric-lemma} which is needed for the proof of Proposition \ref{local-prop1}. The statement is identical to Lemma 2.2 in \cite{Shao} but with $\frac{1}{2}$ replacing $\frac{5}{8}$, and Shao's proof goes through with some relatively minor adjustment. We will nevertheless provide the complete proof here for completeness.

\begin{lemma}\label{asymmetric-lemma}
Let $n \geq 10$ be an even integer and $\{a_i\}$,$\{b_i\}$,$\{c_i\}$ with $0 \leq i \leq n-1$ decreasing sequences in $[0,1]$.  Let $A,B,C$ be the averages of the sequences $a_i,b_i,c_i$ respectively. If for all $(i,j,k)$ with $i+j + k \geq n $ we have $$a_ib_j + b_jc_k + c_ka_i \leq \frac{1}{2}(a_i + b_j + c_k),$$ then $AB + BC + CA \leq \frac{1}{2}(A+B+C)$
\end{lemma}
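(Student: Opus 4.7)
The proof follows Shao's strategy for Lemma 2.2 in \cite{Shao}, with the constant $\tfrac{5}{8}$ replaced by $\tfrac{1}{2}$ throughout; the remaining numerical adjustments are minor. The approach parallels the proof of Lemma \ref{symmetric-lemma} in the case $n \geq 8$, now with three sequences rather than one.

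I would begin by writing $n = 2m$ with $m \geq 5$, partitioning $\{0, 1, \ldots, n-1\}$ into the lower half $L = \{0, \ldots, m-1\}$ and upper half $U = \{m, \ldots, n-1\}$, and introducing partial sums $P_\varepsilon = \sum_{i \in \varepsilon} a_i$, $Q_\varepsilon = \sum_{i \in \varepsilon} b_i$, $R_\varepsilon = \sum_{i \in \varepsilon} c_i$ for $\varepsilon \in \{L, U\}$. Since $P_L + P_U = nA$ and similarly for $b$ and $c$, the target quantity $n^2 (AB+BC+CA)$ expands as a sum of twelve cross products of these partial sums.

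The central device is a pairing identity: for each $(\varepsilon_1, \varepsilon_2, \varepsilon_3) \in \{L, U\}^3$, summing $a_i b_j + b_j c_k + c_k a_i$ over the $m^2$ triples $(i, j, k) \in \varepsilon_1 \times \varepsilon_2 \times \varepsilon_3$ with $i + j + k \equiv 0 \pmod m$ yields exactly $P_{\varepsilon_1} Q_{\varepsilon_2} + Q_{\varepsilon_2} R_{\varepsilon_3} + R_{\varepsilon_3} P_{\varepsilon_1}$. In each of the four arrangements $(L, L, U)$, $(L, U, L)$, $(U, L, L)$, $(U, U, U)$—whose cross products jointly cover each of the twelve terms exactly once—at most one triple in the pairing has $i + j + k < n$, and the hypothesis applies to all the others. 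Summing the four resulting inequalities gives
\[ n^2 (AB + BC + CA) \leq \tfrac{n^2}{2} (A + B + C) + \Delta, \]
where $\Delta = \delta_{LLU} + \delta_{LUL} + \delta_{ULL}$ and, for example, $\delta_{LLU} = (a_0 b_0 + b_0 c_m + c_m a_0) - \tfrac{1}{2}(a_0 + b_0 + c_m)$ is the correction from the bad triple $(0, 0, m)$.

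The main obstacle is to show $\Delta \leq 0$. The naive bound using only $a_i, b_i, c_i \in [0, 1]$ gives $\Delta \leq 9/2$, which is insufficient. To close the gap, I would invoke the hypothesis at further triples of index sum exactly $n$—in particular $(0, m, m)$, $(m, 0, m)$, $(m, m, 0)$, and the diagonal triple $(\lceil n/3 \rceil, \lceil n/3 \rceil, \lceil n/3 \rceil)$—to bound the cross products $b_0 c_m$, $a_0 b_m$, $a_m c_0$ and the middle values $a_m, b_m, c_m$. Substituting these bounds and using the monotonicity $a_0 \geq a_m$, $b_0 \geq b_m$, $c_0 \geq c_m$ allows each individual $\delta$ to be reduced (after some manipulation) to an expression of the form $(a_0 - a_m)(b_0 + c_m - \tfrac{1}{2})$ up to cyclic relabeling, whose sign can be controlled by a short case analysis. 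The threshold $m \geq 5$ (equivalently $n \geq 10$) is exactly what is needed for this case analysis to close, matching Shao's calibration for the $\tfrac{5}{8}$ version.
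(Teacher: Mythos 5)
Your setup is fine and matches the paper's opening moves (partition into halves, pairing over $i+j+k\equiv 0\pmod m$, the four arrangements $(L,L,U)$, $(L,U,L)$, $(U,L,L)$, $(U,U,U)$ each containing at most one bad triple), but the reduction you build on it has a genuine gap: the claim that it suffices to prove $\Delta\leq 0$ is false, because $\Delta$ can be strictly positive under the hypotheses. Take $n=10$ ($m=5$) and $a_i=b_i=c_i$ with $a_0=1$ and $a_i=t=0.36$ for $1\leq i\leq 9$. Every triple with $i+j+k\geq 10$ contains at most one index equal to $0$; if exactly one index is $0$ the constraint reads $2t+t^2\leq\tfrac12(1+2t)$, which holds since $0.4896\leq 0.5$, and if no index is $0$ it reads $3t^2\leq\tfrac32 t$, which holds for $t\leq\tfrac12$. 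Yet $\delta_{LLU}=\delta_{LUL}=\delta_{ULL}=1+2t-\tfrac12(2+t)=\tfrac{3t}{2}=0.54$, so $\Delta=1.62>0$ (and your candidate expression $(a_0-a_m)(b_0+c_m-\tfrac12)=0.64\cdot 0.86$ is also positive). The conclusion of the lemma still holds in this example only because your master inequality $n^2(AB+BC+CA)\leq\tfrac{n^2}{2}(A+B+C)+\Delta$ is far from tight there; so no local argument using the triples $(0,m,m)$, $(m,0,m)$, $(m,m,0)$, a diagonal triple and monotonicity can force $\Delta\leq 0$, and the plan as stated cannot close.

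What the paper (following Shao) does instead is precisely to keep the bad-triple corrections and absorb them using global slack rather than trying to kill them locally. After the normalization $x_i=4a_i-1$, $y_i=4b_i-1$, $z_i=4c_i-1$ the constraint becomes $x_iy_j+y_jz_k+z_kx_i\leq 3$ and the target $XY+YZ+ZX\leq 3$, i.e.\ a bound by $12m^2$; summing the three mixed arrangements gives $9(m^2-1)$ plus correction terms, so the problem becomes $U+V-W\leq 3m^2+9$, where $U$ collects the corrections, $V=X_1Y_1+Y_1Z_1+Z_1X_1$ and $W=x_my_m+y_mz_m+z_mx_m$. The corrections are then controlled by a four-case analysis on the signs of the upper-half sums $X_1,Y_1,Z_1$ (and of $r+s$, $s+t$, $t+r$ with $r=x_0+x_m$ etc.), crucially using bounds on whole half-sums such as $x_m-(m-1)\leq X_1\leq mx_m$ and the $(U,U,U)$ sum to get $V-W\leq 3(m^2-1)$; it is this interplay between the corrections and quantities of size $\asymp m^2$ that makes the inequality close for $m\geq 5$. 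If you want to salvage your draft, you should abandon the goal $\Delta\leq 0$ and instead carry $\Delta$ (or its normalized analogue) into a comparison against the quadratic-in-$m$ slack, which is essentially the paper's argument.
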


\begin{proof}
As before we use the change of variables

\[
x_i = 4a_i - 1, \quad y_i = 4b_i - 1 \quad \textrm{and} \quad z_i = 4c_i - 1,
\]
 for $0 \leq i \leq n-1.$ This results in decreasing sequences $\{x_i\}, \{y_i\},\{z_i\}$ in $[-1,3]$, denote their averages by $X,Y,Z$ respectively. For each triple of indices $(i,j,k)$ such that $i + j + k \geq n$ we have \begin{equation}
    x_iy_j + y_jz_k + z_kx_i \leq 3. \label{xyz-relation}
\end{equation} It suffices to show $XY + YZ + ZX \leq 3.$ Let $n = 2m$ and define 

\[
X_0 = \sum_{i=0}^{m-1}x_i, X_1 = \sum_{i=m}^{n-1}x_i , Y_0 = \sum_{i=0}^{m-1}y_i, Y_1 = \sum_{i=m}^{n-1}y_i, Z_0 = \sum_{i=0}^{m-1}z_i, Z_1 = \sum_{i=m}^{n-1}z_i.
\]

Let $\mathcal{M}$ be the set of all triples $(i,j,k)$ with $0 \leq i,j \leq m-1$, $m \leq k \leq n-1$ and such that $i + j + k \equiv 0 \pmod{m}.$

 We now sum the relation \eqref{xyz-relation} over all triples $(i,j,k) \in \mathcal{M}$ except for $(0,0,m)$ which yields the inequality $$\sum_{(i,j,k) \in \mathcal{M}}x_iy_j + y_jz_k + z_kx_i - (x_0y_0 + y_0z_m + z_mx_0) \leq 3(m^2-1).$$ Observe the sum can be expressed as $X_0Y_0 + Y_0Z_1 + Z_1X_0$ and so we have the inequality $$X_0Y_0 + Y_0Z_1 + Z_1X_0 \leq 3(m^2-1) + (x_0y_0 + y_0z_m + z_mx_0).$$ By symmetry we also have the inequalities 
 
 $$X_0Y_1 + Y_1Z_0 + Z_0X_0 \leq 3(m^2-1) + (x_0y_m + y_mz_0 + z_0x_0),$$

 and

 $$X_1Y_0 + Y_0Z_0 + Z_0X_1 \leq 3(m^2-1) + (x_my_0 + y_0z_0 + z_0x_m).$$

Combining these inequalities we have

\begin{multline*}
n^2(XY + YZ + ZX) = (X_0+X_1)(Y_0 +Y_1) + (Y_0 + Y_1)(Z_0 + Z_1)  \\ + (Z_0 + Z_1)(X_0 + X_1)  \leq 9(m^2-1) + U  - W + V,
\end{multline*}
where
\begin{align*}
    U &= (x_0 + x_m)(y_0 + y_m) + (y_0 + y_m)(z_0 + z_m) + (z_0 + z_m)(x_0 + x_m), \\
    V &= X_1Y_1 + Y_1Z_1 + Z_1X_1, \\
    W &= (x_my_m + y_mz_m + z_mx_m).
\end{align*}

Hence our goal becomes to show \begin{equation}
    U + V - W \leq 3m^2 + 9.
\end{equation}
Now observe that summing condition \eqref{xyz-relation} over all triples $(i,j,k)$ such that $m \leq i,j,k \leq n-1$ and $i + j + k \equiv 0 \pmod{m}$, except for $(m,m,m)$, yields the inequality $$X_1Y_1 + Y_1Z_1 + Z_1X_1 - (x_my_m + y_mz_m + z_mx_m) \leq 3(m^2-1),$$ and so $$V - W \leq 3(m^2-1).$$ Now let \[
r = x_0 + x_m, \quad s = y_0 + y_m \quad \textrm{and} \quad t = z_0 + z_m
\]
and so $U = rs + st + tr.$ Observe $r,s,t \in [-2,6]$. Now suppose one of $\{r+s,s+t,t+s\}$, say $r+s$, is negative. Then $$U = rs + t(r+s) \leq rs + (-2)(r+s) = (r-2)(s-2) -4  \leq (-4)(-4) - 4 = 12.$$ Therefore $$U + V - W \leq 3m^2 + 9,$$ as desired. Hence we may assume $r+s,s+t,t+s$ are nonnegative and consequently that $U$ is an increasing function of $r,s,t.$ 
We will now reduce to four cases based upon the parities of $X_1,Y_1,Z_1.$ First, observe that \[
x_m-(m-1) \leq X_1 \leq mx_m, \quad y_m - (m-1) \leq Y_1 \leq my_m \quad \textrm{and} \quad z_m - (m-1) \leq Z_1 \leq mz_m.
\] \\
\textbf{Case 1} Suppose $X_1,Y_1,Z_1 < 0.$ \\ 
In this case $V = X_1Y_1 + Y_1Z_1 + Z_1X_1$ is a decreasing function of $X_1,Y_1,Z_1$.$$V \leq (x_m - (m-1))(y_m - (m-1)) + (y_m-(m-1))(z_m - (m-1)) + (z_m - (m-1))(x_m - (m-1)),$$ and so $$V \leq 3(m-1)^2 + W -2(m-1)(x_m + y_m + z_m).$$ Since $U$ is increasing in $r,s,t$ we can bound it above, \begin{align*}
    U &\leq (3 + x_m)(3+y_m) + (3 + y_m)(3+z_m) + (3 + z_m)(3+x_m) \\
    U & \leq W + 27 + 6(x_m + y_m + z_m).
\end{align*}
Hence \begin{multline*}
    U + V - W \leq (x_my_m + y_mz_m + z_mx_m) + 3(m-1)^2 + 27 - (2m-8)(x_m + y_m + z_m) \\ \leq 3(m-1)^2 + 27 + (x_m - m + 4)(y_m - m + 4) + (y_m - m + 4)(z_m - m +4)  \\ + (z_m - m + 4)(x_m - m + 4) - 3(4-m)^2.
\end{multline*}
Now observe that $$x_m - m + 4,y_m - m + 4,z_m - m + 4 \in [-m + 3, -m + 7],$$ and so the maximum value in the upper bound above is attained when $x_m = y_m = z_m = -1.$ Therefore $U + V - W \leq 3(m-1)^2 + 27 + 3(3-m)^2 - 3(4-m)^2 \leq 3m^2 + 9.$ \\
\textbf{Case 2} Now suppose exactly two of $X_1,Y_1,Z_1$ are negative, say $X_1,Y_1 < 0, Z_1 \geq 0.$ \\ 
Then $$V = X_1Y_1 + Y_1Z_1 + Z_1X_1 \leq X_1Y_1 \leq m^2$$ since $-m \leq x_m - (m-1) \leq X_1 \leq 0$ and similarly for $Y_1.$ Now observe that
\begin{multline*}
    U-W = (x_0y_m + y_mz_m + z_mx_0 ) + (x_my_0 + y_0z_m + z_mx_m) +  (x_my_m + y_mz_0 + z_0x_m) \\ 
    + (x_0y_0 + y_0z_0 + z_0x_0) - W  \leq 3 + 3 + 3 + 27 + 9 \leq 45.
\end{multline*}
Here we use relation \eqref{xyz-relation} to estimate the first three parentheses, and the remaining terms we estimate trivially.
Therefore $$U + V - W \leq m^2 + 45 \leq 3m^2 + 9$$ for $m \geq 5.$ \\
\textbf{Case 3} Now suppose exactly one of $X_1,Y_1,Z_1$ is negative, say $X_1 < 0$, and that at least one of $X_1 + Y_1$ or $X_1 + Z_1$ is negative, say $X_1 + Y_1 < 0$. \\ 
Then $V = X_1Y_1 + Y_1Z_1 + Z_1X_1 = Z_1(X_1 + Y_1) + X_1Y_1 \leq 0.$ We can bound $U - W \leq 45$ as in Case 2, and so $U + V - W \leq 45 \leq 3m^2 + 9$ for $m \geq 5.$ \\
\textbf{Case 4} Now suppose $X_1 + Y_1, Y_1 + Z_1, Z_1 + X_1$ are all nonnegative.\\ 
It follows that $x_m + y_m, y_m + z_m, z_m + x_m$ are all nonnegative. Therefore $V = X_1Y_1 + Y_1Z_1 + Z_1X_1$ is an increasing function in $X_1,Y_1,Z_1.$ Hence \begin{equation} \label{Vbound}
    V \leq m^2(x_my_m + y_mz_m + z_mx_m) = m^2W 
\end{equation}

As in Case 2, expanding $U$ and using relation \eqref{xyz-relation} we have the bound \begin{equation} \label{Ubound}
    U \leq 9 + (x_0y_0 + y_0z_0 + z_0x_0).
\end{equation}
Subtracting $x_0y_0 + y_0z_0 + z_0x_0$ from both sides, we obtain \begin{equation}
    W + x_0(y_m + z_m) + y_0(x_m + z_m) + z_0(x_m + y_m) \leq 9. \label{U - 00 ineq}
\end{equation}
Now suppose that $x_m + y_m, y_m + z_m, z_m + x_m \geq 0.6.$ This implies that $$x_0 + y_0 - 10(x_m+y_m) \leq 6 - 10(0.6) = 0,$$ and so $(x_0 + y_0 - 10(x_m + y_m))(z_0 - z_m) \leq 0.$ Expanding we obtain $$(x_0z_0 + y_0z_0)+10(x_mz_m + y_mz_m) \leq x_0z_m + y_0z_m + 10(x_mz_0 + y_mz_0).$$ By symmetry we also have the inequalities

$$(x_0y_0 + z_0y_0)+10(x_my_m + z_my_m) \leq x_0y_m + z_0y_m + 10(x_my_0 + z_my_0),$$ and

$$(z_0x_0 + y_0x_0)+10(z_mx_m + y_mx_m) \leq z_0x_m + y_0x_m + 10(z_mx_0 + y_mx_0).$$
Adding these inequalities we obtain \begin{equation*}
    2(x_0y_0 + y_0z_0 + z_0x_0) + 20W \leq 11(z_0(x_m + y_m) + y_0(x_m + z_m) + x_0(y_m + z_m)).
\end{equation*}
Inserting the bound \eqref{U - 00 ineq}, we obtain $$ 2(x_0y_0 + y_0z_0 + z_0x_0) + 20W \leq 11(9-W),$$
and so we have
\begin{equation} \label{Wbound}
        2(x_0y_0 + y_0z_0 + z_0x_0) + 31W \leq 99.
\end{equation}
Therefore, using \eqref{Vbound}, \eqref{Vbound} and \eqref{Wbound}, we obtain \begin{align*}
    U + V - W &\leq 9 + (x_0y_0 + y_0z_0 + z_0x_0) + m^2W - W \\
    & \leq 9 + \frac{1}{2}(99-31W) + m^2W - W \\
    & \leq 49.5 + W(m^2 - 16.5) \\
    & \leq 3m^2, 
\end{align*}
Where the last inequality holds since $W \leq 3$, and $m \geq 5.$
Finally we suppose at least one of $x_m + y_m, y_m + z_m, z_m + x_m$ is less than $0.6$, say $x_m + y_m < 0.6.$ Since $x_m + y_m \geq 0$, we have $x_my_m < (\frac{0.6}{2})^2$. Observe $$W = z_m(x_m + y_m) + x_my_m \leq 3(0.6) + (\frac{0.6}{2})^2 < 2.$$ Hence, using this bound on $W$, \eqref{Ubound} and \eqref{Vbound}, $$U + V - W \leq 9 + (x_0y_0 + y_0z_0 + z_0x_0) + 2(m^2 - 1) \leq 36 + 2(m^2-1) \leq 3m^2 + 9$$ for $m \geq 5.$
\end{proof}

\section{Proof of local result}\label{sec3}
In this section we will prove our local result Theorem  \ref{local-result}. It is our key tool which will make the transference principle argument work, and it will follow from Propositions \ref{local-prop1} and \ref{local-prop2}.

\begin{proposition}\label{local-prop1}
Let $m$ be a squarefree integer which satisfies $(m,30) = 1.$ Let $f: \Z_m^* \rightarrow [0,1]$ satisfy $$\sum_{a \in \Z_m^*}f(a) > \frac{\varphi(m)}{2}.$$ Then for any $x \in \Z_m$ there exists $a_1,a_2,a_3 \in \Z_m^*$ with $x = a_1 + a_2 + a_3$ such that $$f(a_1)f(a_2) + f(a_2)f(a_3) + f(a_3)f(a_1) > \frac{1}{2}(f(a_1) + f(a_2) + f(a_3)).$$
\end{proposition}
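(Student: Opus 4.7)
The plan is induction on the number of distinct prime factors of $m$, paralleling Shao's proof of his Proposition~3.1 in \cite{Shao}: the symmetric Lemma~\ref{symmetric-lemma} will drive the base case, and the asymmetric Lemma~\ref{asymmetric-lemma} will drive the peeling step. The hypothesis $(m,30)=1$ forces every prime factor of $m$ to be $\geq 7$, giving the even integer $n=p-1\geq 6$ that Lemma~\ref{symmetric-lemma} needs, and whenever $m$ has at least two distinct prime factors one of them is $\geq 11$, providing $n\geq 10$ as required by Lemma~\ref{asymmetric-lemma}.

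For the base case $m=p$ I argue by contradiction: suppose that for some $x\in\Z_p$, every $(a_1,a_2,a_3)\in(\Z_p^*)^3$ with $a_1+a_2+a_3=x$ satisfies the reverse inequality $f(a_1)f(a_2)+f(a_2)f(a_3)+f(a_3)f(a_1)\leq\tfrac{1}{2}(f(a_1)+f(a_2)+f(a_3))$. Write $\Z_p^*=\{b_0,\dots,b_{p-2}\}$ with $\alpha_\ell:=f(b_\ell)$ decreasing. Since $\tfrac{1}{p-1}\sum_\ell\alpha_\ell>1/2$, the contrapositive of Lemma~\ref{symmetric-lemma} supplies indices $(i,j,k)$ with $i+j+k\geq p-1$ and $\alpha_i\alpha_j+\alpha_j\alpha_k+\alpha_k\alpha_i>\tfrac{1}{2}(\alpha_i+\alpha_j+\alpha_k)$. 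Setting $A_\ell:=\{b_0,\dots,b_\ell\}\subseteq\Z_p^*$, iterated Cauchy--Davenport gives $|A_i+A_j+A_k|\geq\min(p,i+j+k+1)=p$, so there exist $a_\ell^*\in A_\ell$ with $a_1^*+a_2^*+a_3^*=x$ and $f(a_\ell^*)\geq\alpha_\ell$ for each $\ell\in\{i,j,k\}$.

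The main obstacle is lifting the strict inequality from $(\alpha_i,\alpha_j,\alpha_k)$ to the possibly larger triple $(f(a_1^*),f(a_2^*),f(a_3^*))$. Put $g(r,s,t):=rs+st+tr-\tfrac{1}{2}(r+s+t)$. The key observation is that $g(u,v,w)>0$ for $u,v,w\in[0,1]$ already forces every pairwise sum $u+v,v+w,u+w$ to be $\geq 1/2$: if, say, $u+v<1/2$, then $g(u,v,\cdot)$ is linear and decreasing in $w$, whence $g(u,v,w)\leq g(u,v,0)=uv-(u+v)/2\leq 0$, a contradiction. Consequently each partial derivative $\partial_r g=s+t-\tfrac{1}{2}$, $\partial_s g=r+t-\tfrac{1}{2}$, $\partial_t g=r+s-\tfrac{1}{2}$ is nonnegative throughout $[\alpha_i,1]\times[\alpha_j,1]\times[\alpha_k,1]$. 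Raising one coordinate at a time therefore gives $g(f(a_1^*),f(a_2^*),f(a_3^*))\geq g(\alpha_i,\alpha_j,\alpha_k)>0$, contradicting the standing assumption and finishing the base case.

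For the inductive step, write $m=pm'$ with $p\geq 11$ a prime factor of $m$ and $(p,m')=1$. By the Chinese Remainder Theorem $\Z_m^*\cong\Z_p^*\times\Z_{m'}^*$; define the fibered average $\bar f:\Z_{m'}^*\to[0,1]$ by $\bar f(y):=\tfrac{1}{p-1}\sum_{z\in\Z_p^*}f(z,y)$, which inherits $\sum_y\bar f(y)>\varphi(m')/2$. The inductive hypothesis at $\bar x:=x\bmod m'$ supplies $b_1,b_2,b_3\in\Z_{m'}^*$ summing to $\bar x$ with the cyclic inequality for $\bar f$. Letting $f_\ell(z):=f(z,b_\ell)$, whose average on $\Z_p^*$ is $A_\ell=\bar f(b_\ell)$, this reads $A_1A_2+A_2A_3+A_3A_1>\tfrac{1}{2}(A_1+A_2+A_3)$, so the contrapositive of Lemma~\ref{asymmetric-lemma} (whose $n=p-1\geq 10$ hypothesis is met) produces $(i,j,k)$ with $i+j+k\geq p-1$ such that the decreasing rearrangements $\tilde f_1,\tilde f_2,\tilde f_3$ satisfy $\tilde f_1(i)\tilde f_2(j)+\tilde f_2(j)\tilde f_3(k)+\tilde f_3(k)\tilde f_1(i)>\tfrac{1}{2}(\tilde f_1(i)+\tilde f_2(j)+\tilde f_3(k))$. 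Cauchy--Davenport on the three corresponding top sets in $\Z_p$, followed by the same monotonicity-lifting argument as in the base case (which never used comparability of the three coordinates), produces $c_\ell\in\Z_p^*$ with $c_1+c_2+c_3\equiv x\pmod{p}$ and the required cyclic inequality for $f_1(c_1),f_2(c_2),f_3(c_3)$. The CRT-combined elements $a_\ell:=(c_\ell,b_\ell)\in\Z_m^*$, which satisfy $f(a_\ell)=f_\ell(c_\ell)$ and $a_1+a_2+a_3\equiv x\pmod{m}$, complete the induction.
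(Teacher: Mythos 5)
Your argument is correct and is essentially the proof the paper intends: the paper simply defers to Shao's Proposition 3.1 with $\tfrac58$ replaced by $\tfrac12$, and that argument is exactly your induction on the number of prime factors, with Lemma \ref{symmetric-lemma} plus Cauchy--Davenport in the base case and the fibered average, Lemma \ref{asymmetric-lemma}, and Cauchy--Davenport in the peeling step. Your monotonicity lifting of $T(x,y,z)=xy+yz+zx-\tfrac12(x+y+z)$ above a point where it is positive is the same observation the paper uses elsewhere (e.g.\ in the proof of Proposition \ref{local-prop2}), so nothing is missing.
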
 

\begin{proof}See Proposition $3.1$ in Shao \cite{Shao}. The proof is identical upon swapping the occurrences of $\frac{5}{8}$ with $\frac{1}{2}$.
\end{proof}

\begin{proposition}\label{local-prop2}
Let $f_1,f_2,f_3 : \Z_{15}^* \rightarrow [0,1]$ be functions such that $f_i(x) = 0$ for all $x \equiv 2 \pmod{3}$, $i = 1,2,3$. Let $$F_i = \sum_{a \in \Z_{15}^*}f_i(a),$$ and suppose that \begin{equation} \label{F1F2F3}
    F_1F_2 + F_2F_3 + F_3F_1 > 2(F_1 + F_2 + F_3).
\end{equation} Then for any $x \equiv 0 \pmod{3}$ there exist $a_1,a_2,a_3 \in \Z_{15}^*$ with $x = a_1 + a_2 + a_3$ such that $$f_i(a_i) > 0, f_1(a_1) + f_2(a_2) + f_3(a_3) > \frac{3}{2}.$$
\end{proposition}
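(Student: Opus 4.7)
The plan is to adapt Shao's proof of his Proposition~$3.2$ in \cite{Shao}, with $\tfrac{5}{8}$ replaced by $\tfrac{1}{2}$ throughout. First I would carry out a Chinese Remainder Theorem reduction. Under $\Z_{15} \cong \Z_3 \times \Z_5$, the four reduced residues $\{1,4,7,13\} \subset \Z_{15}^*$ that are $\equiv 1 \pmod 3$ correspond to $\{1\} \times \Z_5^*$, while the residues $x \equiv 0 \pmod 3$ correspond to $\{0\} \times \Z_5$. Writing $g_i : \Z_5^* \to [0,1]$ for the pullback of $f_i$ along the second factor, the proposition becomes the following: given $g_1, g_2, g_3 : \Z_5^* \to [0,1]$ with $F_i = \sum_{b \in \Z_5^*} g_i(b)$ satisfying \eqref{F1F2F3}, for every $y \in \Z_5$ there exist $b_1, b_2, b_3 \in \Z_5^*$ with $b_1 + b_2 + b_3 \equiv y \pmod 5$, each $g_i(b_i) > 0$, and $g_1(b_1) + g_2(b_2) + g_3(b_3) > \tfrac{3}{2}$.

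Two easy preliminary observations come next. First, each $F_i \leq |\Z_5^*| = 4$, and \eqref{F1F2F3} forces every $F_i > 0$: if, say, $F_3 = 0$, then \eqref{F1F2F3} reads $(F_1 - 2)(F_2 - 2) > 4$, which is impossible on $[0,4]^2$. Second, the multiplicative action of $\Z_5^*$ on $\Z_5$ has orbits $\{0\}$ and $\{1,2,3,4\}$, and the substitution $b_i \mapsto c b_i$, $y \mapsto cy$, $g_i \mapsto g_i \circ (\cdot / c)$ preserves both \eqref{F1F2F3} and the indexed form of the conclusion. Hence it suffices to verify the two representative cases $y = 0$ (with $12$ ordered triples in $(\Z_5^*)^3$ summing to $0$) and $y = 1$ (with $13$).

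The core of the proof is by contradiction: suppose that for some such $y$, every ordered triple $(b_1, b_2, b_3) \in (\Z_5^*)^3$ with $b_1 + b_2 + b_3 \equiv y \pmod 5$ and all $g_i(b_i) > 0$ satisfies $g_1(b_1) + g_2(b_2) + g_3(b_3) \leq \tfrac{3}{2}$. Following Shao's template I would split into cases according to the supports $S_i = \{b \in \Z_5^* : g_i(b) > 0\}$. When each $|S_i| = 4$, I would sum the assumed bound $\sum_i g_i(b_i) \leq \tfrac{3}{2}$ over all triples with sum $y$ after multiplying by weights (such as $g_1(b_1) g_2(b_2) g_3(b_3)$, or products of indicator functions of the $S_i$) chosen so that the non-positive triples drop out, and then use the explicit algebraic identities available on $\Z_5^*$ to reach a contradiction of the form $F_1 F_2 + F_2 F_3 + F_3 F_1 \leq 2(F_1 + F_2 + F_3)$. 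When some $|S_i| < 4$, the hypothesis \eqref{F1F2F3} forces the surviving values of $g_i$ to be close to $1$, and the existence of a valid triple can be read off directly from the short explicit list of triples summing to $y$.

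I expect the main obstacle to be the bookkeeping of the case analysis rather than any single conceptual step. The extremal configuration noted after Theorem~\ref{local-result}---the functions $f_i = \mathbf{1}_{\{1,7\}}$ with $x = 12$, for which \emph{no} positive triple exists at all---lies exactly on the boundary of \eqref{F1F2F3}, so some inequality in the argument must be sharp; care must be taken that none of the case reductions weaken this tight constant. Once the case structure is in place, however, each subcase should reduce to an elementary finite verification inside the four-element group $\Z_5^*$.
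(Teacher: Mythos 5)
Your framing steps are fine (the CRT reduction to $\Z_5^*$, the multiplicative symmetry reducing to $y\in\{0,1\}$, the counts $12$ and $13$ of ordered triples), but the core of your argument is a plan rather than a proof, and both halves of your case split are left unestablished. In the full-support case you assert that summing the assumed bound $g_1(b_1)+g_2(b_2)+g_3(b_3)\le\frac32$ against unspecified weights and invoking ``explicit algebraic identities'' will produce $F_1F_2+F_2F_3+F_3F_1\le 2(F_1+F_2+F_3)$; no such weighting is exhibited, and it is not at all clear one exists, since your constraints live only on the slice $\{b_1+b_2+b_3\equiv y\}$ (at most $13$ triples) while the target inequality involves the full products $F_iF_j$ — bridging that gap is precisely the hard quantitative step. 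In the small-support case you claim the conclusion ``can be read off directly,'' but these are exactly the delicate configurations: the tight example $f_i=1_{\{1,7\}}$ shows there is essentially no slack, and for support sizes $(4,4,1)$ and $(4,4,2)$ the bound on $F_1+F_2+F_3$ obtainable from the $\le\frac32$ constraints alone exceeds $6$, so additional arguments (a lower bound on $F_3$ via the monotonicity of $T(x,y,z)=xy+yz+zx-2(x+y+z)$, then re-solving the optimization with that constraint) are genuinely needed. As written, your proposal does not contain a verification of either case.

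For comparison, the paper avoids any per-$y$ analysis. It sorts the values $y_{ij}=f_i(x_{ij})$ of each $f_i$ on its support $A_i$ with $n_i=|A_i|$, notes $n_i\ge F_i$ forces $n_1n_2+n_2n_3+n_3n_1>2(n_1+n_2+n_3)$, and then dichotomizes over index triples $(j_1,j_2,j_3)$ with $j_1+j_2+j_3>6$. If some such triple has value-sum $>\frac32$, Cauchy--Davenport--Chowla applied in $\Z_5^*$ to the prefix sets $\{x_{i1},\dots,x_{ij_i}\}$ shows their sumset already covers every residue $\equiv 0\pmod 3$, so the conclusion holds for all targets $x$ simultaneously. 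Otherwise all these index triples obey the $\le\frac32$ bound, and a finite linear program gives $F_1+F_2+F_3\le 6$ for every support pattern except $(4,4,1)$ and $(4,4,2)$, whence Cauchy--Schwarz yields $F_1F_2+F_2F_3+F_3F_1\le\frac13(F_1+F_2+F_3)^2\le 2(F_1+F_2+F_3)$, contradicting \eqref{F1F2F3}; the two exceptional patterns are handled by the $F_3$-splitting just described. If you wish to pursue your per-$y$ weighted-sum route instead, you must actually construct the weights and check the resulting inequality on each of the finitely many support patterns — a computation your proposal has not begun.
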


\begin{proof}
Let $A_i$ be the support of $f_i$, and so $A_i = \{x \in \Z_{15}^* : f_i(x) > 0 \} \subset \{1,4,7,13\}.$ Let $n_i = |A_i|$, without loss of generality we may assume that $n_1 \geq n_2 \geq n_3.$ Since $n_i \geq F_i$, we have that \begin{equation} \label{n1n2n3ineq}
    n_1n_2 + n_2n_3 + n_3n_1 > 2(n_1 + n_2 + n_3).
\end{equation} Let $M$ be the set of all possible $(n_1,n_2,n_3)$ satisfying \eqref{n1n2n3ineq}, with $n_1,n_2,n_3 \in \{0,1,2,3,4\}.$ Fix some $(n_1,n_2,n_3) \in M.$ For $i \in \{1,2,3\}$, write \[A_i = \{x_{i1}, x_{i2}, \dots, x_{in_i}\} \quad \textrm{and} \quad y_{ij} = f_i(x_{ij}),
\] for $1 \leq j \leq n_i.$ Without loss of generality assume that $y_{i1} \geq y_{i2} \geq \dots \geq y_{in_i}.$ 

Let $J$ be the set of all triples $(j_1,j_2,j_3)$ satisfying \begin{equation} \label{j1j2j3}
    j_1 + j_2 + j_3 > 6, \quad 1 \leq j_i \leq n_i.
\end{equation}

Suppose $y_{1j_1} + y_{2j_2} + y_{3j_3} > \frac{3}{2}$ for some $(j_1,j_2,j_3) \in J$. The three sets $A_i = \{x_{i1},\dots, x_{ij_i}\}$ for $i = 1,2,3$ are subsets of $$\Z_{15}^* \cap \{x \in \Z_{15} : x \equiv 1 \pmod 3\}.$$ Making use of the isomorphism $\Z_{15}^* \cong \Z_3^* \times \Z_5^*$ we can view $A_1,A_2,A_3 \subset \Z_5^*$. It follows by Cauchy-Davenport-Chowla that $$|A_1+A_2+A_3| \geq \min(5,|A_1|+|A_2|+|A_3|-2) \geq 5,$$ where the last inequality follows since $|A_1|+|A_2|+|A_3| = j_1 + j_2 + j_3 > 6$ by \eqref{j1j2j3}. Therefore viewing $A_1,A_2,A_3$ as subsets of $$\Z_{15}^* \cap \{x \in \Z_{15} : x \equiv 1 \pmod 3\},$$ it follows that $$A_1 + A_2 + A_3 = \{x\in \Z_{15}: x\equiv 0 \pmod{3}\}.$$ Hence for any $x \equiv 0 \pmod{3}$, we can write $x \equiv a_1 + a_2 + a_3 \pmod{15}$ with $a_i \in A_i,$  such that $$f(a_i) \geq y_{ij_i} > 0, \quad f(a_1) + f(a_2) + f(a_3) \geq y_{1j_1} + y_{2j_2} + y_{3j_3} > \frac{3}{2}.$$

We now suppose for a contradiction that $$y_{1j_1} + y_{2j_2} + y_{3j_3} \leq \frac{3}{2}$$ for each triple $(j_1,j_2,j_3) \in J.$ We can set up the following optimization problem with variables $y_{ij} \in [0,1]$ for $1 \leq j \leq n_i$ and constraints given by  $y_{1j_1} + y_{2j_2} + y_{3j_3} \leq \frac{3}{2}$ for all $(j_1,j_2,j_3) \in J.$ Our objective is to maximize the sum of all variables: \begin{equation} \label{S opt}
    S = \sum_{i=1}^3 \sum_{j=1}^{n_i}y_{ij} = F_1 + F_2 + F_3.
\end{equation} The constraints and the objective function in this optimization problem are all linear, and the maximum of $S$ can be found using a linear programming algorithm. Our conclusion is the following.

$$F_1 + F_2 + F_3 \leq
\begin{cases}
 6.5      &     (n_1,n_2,n_3) = (4,4,1) \\
 6.2      & (n_1,n_2,n_3) = (4,4,2)\\
 6        &     \textrm{otherwise}

\end{cases}
$$
Suppose $F_1 + F_2 + F_3 \leq 6.$ Applying Cauchy-Schwarz it follows that \begin{equation} \label{F1F1F3CONT}
    F_1F_2 + F_2F_3 + F_3F_1 \leq \frac{1}{3}(F_1 + F_2 + F_3)^2 \leq 2(F_1 + F_2 + F_3),
\end{equation} which contradicts the condition \eqref{F1F2F3}. We will now show the other possibilities for the triple $(n_1,n_2,n_3)$ also lead to a contradiction.

For notational convenience, we will write $$T(x,y,z) = xy + yz + zx - 2(x+y+z).$$ By the condition \eqref{F1F2F3} it follows that $T(F_1,F_2,F_3) > 0$ and that $T(x,y,z)$ is an increasing function of $x,y,z$ in the range $x \geq F_1$, $y \geq F_2$, $z \geq F_3$. 

Now suppose $(n_1,n_2,n_3) = (4,4,2)$. If $F_3 \leq 1,$ it follows that $$T(F_1,F_2,F_3) \leq T(2.6,2.6,1) < 0.$$ Hence we may assume $F_3 \geq 1,$ solving the optimization problem as before but with this additional constraint, we have $$F_1 + F_2 + F_3 \leq 6.$$ This is a contradiction due to inequality \eqref{F1F1F3CONT}.

Finally suppose $(n_1,n_2,n_3) = (4,4,1),$ then $$F_1 + F_2 + F_3 \leq 6.5.$$ Consider the case where $F_3 \leq 0.5,$ then $$T(F_1,F_2,F_3) \leq T(3,3,0.5) < 0.$$ Hence we may assume $F_3 \geq 0.5,$ including this as an additional constraint and solving the optimization problem as before we obtain $$F_1 + F_2 + F_3 \leq 6.$$ This is a contradiction due to inequality \eqref{F1F1F3CONT}.
\end{proof}

We now prove our local result by an induction argument \ref{local-result}. 

\begin{proof}
Let $m$ be an odd square free integer divisible by $3$. Let $m = m_1m_2$ where $m_2$ divides $15$ and $(m_1,30) = 1$. Note that $m_2 \in \{3,15\}$. Identifying $\Z_m \equiv \Z_{m_1} \times \Z_{m_2}$, let $(u,v) \in \Z_m$ where $u \in \Z_{m_1}, v \in \Z_{m_2}$ with $v \equiv 0 \pmod{3}$. Define $$f'(x) = \frac{2}{\phi(m_2)}\sum_{y \in \Z_{m_2}^*}f(x,y)$$ for $x \in \Z_{m_1}^*$. Observe that 
$$\sum_{x \in \Z_{m_1}^*}f'(x) > \frac{\phi(m_1)}{2},
$$
and so we may use Proposition \ref{local-prop1} to conclude that there exist $a_1,a_2,a_3 \in \Z_{m_1}^*$ with $u = a_1 + a_2 + a_3$ such that \begin{equation} \label{f' sym}
    f'(a_1)f'(a_2) + f'(a_2)f'(a_3) + f'(a_3)f'(a_1) > \frac{1}{2}(f'(a_1) + f'(a_2) + f'(a_3)).
\end{equation} 
Suppose first that $m_2 = 3$. In this case we have $f'(a_i) = f(a_i,1)$ for $i = 1,2,3$. Setting $x_i = f(a_i,1)$ for $i = 1 ,2,3$, Inequality \eqref{f' sym} becomes
\begin{equation}\label{f'sym-x}
 x_1x_2 + x_2x_3 + x_3x_1 > \frac{1}{2}(x_1+x_2+x_3).   
\end{equation}
We may equivalently express Inequality \ref{f'sym-x} as 
\begin{equation}\label{sym-inequality-mod3}
    (x_1+x_2+x_3)^2 - (x_1+x_2+x_3) > x_1^2+x_2^2+x_3^3.
\end{equation}
Now observe, by an application of Cauchy-Schwarz, we have that 
\begin{equation}\label{c-s-mod3}
    x_1^2 + x_2^2 + x_3^2 \geq \frac{(x_1+x_2+x_3)^2}{3}.
\end{equation}
Combining Inequalities \eqref{sym-inequality-mod3} and \eqref{c-s-mod3} it follows that
$$
x_1 + x_2 + x_3 > 3/2.
$$
Hence $(u,v) = (u,0) = (a_1,1) + (a_2,1) + (a_3,1)$ with $$f(a_1,1) + f(a_2,1) + f(a_3,1) > 3/2.$$ Since Inequality \ref{f'sym-x} implies that $x_i > 0$, it follows that $f(a_i,1) > 0$ for $i = 1,2,3$ as desired.

Now suppose $m_2 = 15$. Define $f_i : \Z_{15}^* \rightarrow [0,1]$ for $i = 1,2,3$ by $$f_i(y) = f(a_i,y)$$ for each $y \in \Z_{15}^*.$ 
The condition \eqref{f' sym} satisfied by the $f'(a_i)$ allows the use of Proposition \ref{local-prop2} to conclude that there exists $b_1,b_2,b_3 \in \Z_{15}^*$ with $v = b_1 + b_2 + b_3,$ such that \[f_i(b_i) > 0 \quad \textrm{and} \quad f_1(b_1) + f_2(b_2) + f_3(b_3) > \frac{3}{2}.\] Therefore we have $(u,v) = (a_1,b_1) + (a_2,b_2) + (a_3,b_3)$ with \[f(a_i,b_i) > 0 \quad \textrm{and} \quad f(a_1,b_1) + f(a_2,b_2) + f(a_3,b_3) > \frac{3}{2},\] as desired.
    
\end{proof}

\section{Transference principle }\label{sec4}
In this section we state the variant of the transference principle which we will use in the next section to deduce our main result. We will record the statement as it is in Proposition 4.1 in \cite{Shao}.

Let $\Z_N$ be the cyclic group of integers modulo $N.$. Define the Fourier transform of a function $f: \Z_N\rightarrow \C$ by
$$ \wh{f}(r) = \sum_{n \in \Z_N} f(n) e_N(rn), \quad r \in \Z_N,$$ where $e_N(rn) = \exp(2\pi rn/N ).$ 
For two functions $f_1,f_2:\Z_N\rightarrow\C$, their convolution $f_1*f_2$ is defined by
$$ f_1*f_2(n) = \sum_{n_1 \in \Z_N} f_1(n_1) f_2(n-n_1), \quad n \in \Z_N.$$

\begin{proposition}\label{transference}
Let $N$ be a sufficiently large prime. Suppose that $\nu_i: \Z_N \rightarrow \R^+$ and $f_i : \Z_N \rightarrow \R^+ (i = 1,2,3)$ are functions satisfying the majorization condition \begin{equation} \label{majorant}
    0 \leq f_i(n) \leq \nu_i(n),
\end{equation}
and the mean condition \begin{equation} \label{mean}
    \min(\delta_1,\delta_2,\delta_3,\delta_1 +\delta_2 + \delta_3-1) \geq \delta
\end{equation}

for some $\delta > 0,$ where the $\delta_i = \sum_{x \in \Z_N}f_i(x).$ Suppose that $\nu_i$ and $f_i$ also satisfy the pseudorandomness conditions \begin{equation} \label{decay}
    |\hat{\nu}_i(r) - \delta_{r,0}| \leq \eta 
\end{equation} for all $r \in \Z_N$, where $\delta_{r,0}$ is the Kronecker delta, and \begin{equation} \label{meanvalue}
    \|\hat{f}_i\|_q = \Big(\sum_{r \in \Z_N}|\hat{f}_i(r)|^q\Big)^{\frac{1}{q}} \leq M
\end{equation}
for some $2 < q < 3$ and $\eta, M > 0.$ Then for any $x \in \Z_N,$ we have \begin{equation}
    \sum_{y,z \in \Z_N}f_1(y)f_2(z)f_3(x-y-z) \geq \frac{c(\delta)}{N}
\end{equation}
for some constant $c(\delta) > 0$ depending only on $\delta,$ provided that $\eta = \eta(\delta,M,q)$ is small enough.

\end{proposition}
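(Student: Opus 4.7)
The plan is to prove Proposition \ref{transference} by the standard Fourier-decomposition transference argument originating with Green \cite{Green} and adapted to the ternary setting by Li--Pan and Shao \cite{Shao}. The goal is to replace each unbounded $f_i$ by a nonnegative pointwise-bounded ``structured model'' $f_i^{\sharp}$ of essentially the same mean $\delta_i$, show that the uniform remainders $f_i^{\flat} = f_i - f_i^{\sharp}$ contribute negligibly to the triple convolution, and then apply an easy lower bound for the triple convolution of essentially bounded functions on $\Z_N$ whose densities satisfy \eqref{mean}.

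Concretely I would first define the large-spectrum sets $R_i = \{r \in \Z_N : |\hat{f_i}(r)| \geq \tau\}$ for a threshold $\tau = \tau(\delta)$ to be chosen later; the restriction hypothesis \eqref{meanvalue} gives $|R_i| \leq (M/\tau)^q$, so $R = \{0\} \cup R_1 \cup R_2 \cup R_3$ has bounded size depending only on $\tau$, $M$, $q$. I would then construct a nonnegative $\beta : \Z_N \to \R^+$ with $\sum_x \beta(x) = 1$ and $\hat{\beta}(r) \approx \mathbf{1}_{r \in R}$, for instance via the normalized squared indicator of a Bohr set dual to $R$, and set $f_i^{\sharp} = f_i * \beta$. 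The decomposition has three features I would verify in turn: (i) $f_i^{\sharp}$ is pointwise at most $1/N + O(\eta |R|/N)$, which uses the majorization $f_i \leq \nu_i$ together with $|\hat{\nu_i}(r) - \delta_{r,0}| \leq \eta$ via $f_i^{\sharp} \leq \nu_i * \beta = N^{-1}\sum_{r} \hat{\nu_i}(r)\hat{\beta}(r) e_N(-r\cdot)$; (ii) $f_i^{\sharp}$ has the same mean $\delta_i$ as $f_i$; and (iii) $\hat{f_i^{\flat}}$ vanishes on $R$ and is at most $\tau$ elsewhere.

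With the decomposition in hand, expand $T(x) := \sum_{y,z} f_1(y) f_2(z) f_3(x-y-z)$ multilinearly into $8$ terms. The structured--structured--structured term is bounded below by $c(\delta)/N$ by a direct Fourier calculation using only that the $f_i^{\sharp}$ are essentially $L^\infty$-bounded by $1/N$ and have means $\delta_i$ satisfying $\delta_1 + \delta_2 + \delta_3 - 1 \geq \delta$. Each of the seven remaining mixed terms contains at least one uniform factor $f_i^{\flat}$; writing the term on the Fourier side and using $\|\hat{f_i^{\flat}}\|_\infty \leq \tau$ together with H\"older's inequality and $\|\hat{f_i}\|_q \leq M$ for some $2 < q < 3$ (which is where the restriction hypothesis is genuinely used), one gets a bound of the shape $O(\tau^{3-q} M^{q}/N)$, which is $o(c(\delta)/N)$ once $\tau$ is small.

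The hard part will be calibrating the three parameters against one another. One needs $\tau$ small enough for the mixed-term bound $O(\tau^{3-q} M^q/N)$ to be absorbed into the main term, which forces $|R| \lesssim (M/\tau)^q$ to be somewhat large; this in turn requires $\eta$ to be correspondingly small in order that the error $\eta |R|/N$ in (i) remains tolerable, and it requires the Bohr set defining $\beta$ to be taken wide enough that $\hat{\beta}$ really does approximate $\mathbf{1}_R$. Once these parameters are chosen carefully the pieces fit together routinely, and since the proposition as stated is verbatim Proposition $4.1$ of \cite{Shao}, I would in practice simply quote Shao's proof.
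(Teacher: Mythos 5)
Your proposal is correct and matches the paper's treatment: the paper gives no independent argument for Proposition \ref{transference} and simply cites Proposition 4.1 of Shao, which is precisely how you conclude. Your sketch of the underlying Green-style transference proof (large spectrum, convolution with a Bohr-type measure to get a bounded structured part, mean preservation, and H\"older with the restriction estimate \eqref{meanvalue} to kill the mixed terms) is a faithful outline of the standard argument behind that cited result, so there is nothing to add beyond the citation.
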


\begin{proof}
    See Proposition 4.1 in \cite{Shao}.
\end{proof}

\section{Ternary Goldbach for dense subsets of primes in a fixed residue class}\label{sec5}

In this section we prove Theorem \ref{density-prime-1/2}.

\begin{proof}
Let $n$ be a sufficiently large odd integer such that $n \equiv 0 \pmod 3$. Let $A \subset \CP_{1,3}$ be a subset satisfying $\vdelta(A) > 1/2$. Then there exists a positive constant $\delta > 0$ such that 

\begin{equation}\label{density hypothesis} |A \cap [1,N]| > (\frac{1}{2} + \delta)\frac{N}{2\log N},
\end{equation} for all sufficiently large positive integers $N.$

 Set $W = \prod_{p \leq z}p$, where $z = z(\delta)$ is a constant sufficiently large in terms of $\delta$. 
For each reduced residue class $b \pmod{W}$, define $$f(b) = \max(\frac{\phi(W)3}{2n}\sum_{\substack{x \in A \cap [1,\frac{2n}{3}] \\ x \equiv b \pmod{W}}} \log x - \frac{\delta}{8},0).$$ Note the support of $f$ is contained in those reduced residue classes congruent to $1 \pmod{3}$ and $0 \leq f(b) \leq 1$ for all $b \in \Z_W^*.$ Moreover by the prime number theorem in arithmetic progressions, \eqref{density hypothesis} implies $$\sum_{b \in \Z_W^*}f(b) > \frac{\phi(W)}{4}.$$ Let $m = W/2$, and since $\Z_W^*$ can be identified with $\Z_m^*$ we may consider $f$ as a function $f : \Z_m^* \rightarrow [0,1]$ satisfying $$\sum_{b \in \Z_m^*}f(b) > \frac{\phi(m)}{4}.$$ Hence we may apply our local result \ref{local-result} to conclude there exist $b_1,b_2,b_3 \in \Z_W^*$ such that $n \equiv b_1 + b_2 + b_3 \pmod{m}$ and \begin{equation}
    f(b_i) > 0, \quad  f(b_1) + f(b_2) + f(b_3) > \frac{3}{2}.
\end{equation} Since $n$ is odd, $n \equiv b_1 + b_2 + b_3 \pmod{2}$ and therefore $n \equiv b_1 + b_2 + b_3 \pmod{W}.$ Now define $$A_i = \{\frac{x - b_i}{W} : x \in A \cap [1,\frac{2n}{3}], x \equiv b_i \pmod W    \},$$ and so it suffices to show that $$\frac{n - b_1 - b_2 - b_3}{W} \in A_1 + A_2 + A_3.$$ Let $\kappa > 0$ be sufficiently small, and choose a prime  $$N \in \Big[\frac{(1 + \kappa)n}{W},\frac{(1 + 2\kappa)n}{W}\Big].$$ By our choice of $N$ it suffices to show that $\frac{n - b_1 - b_2 - b_3}{W} \in A_1 + A_2 + A_3$ when the sets $A_1,A_2,A_3$ are considered subsets of $\Z_N.$

For $i \in \{1,2,3\}$, define $f_i, \nu_i : \Z_N \rightarrow \R_{\geq 0}$ (naturally identifying $\Z_N$ with $\{1,2,\cdots,N\}$) by
$$ \nu_i(n) =
\begin{cases}
             \frac{\phi(W)}{WN}\log(Wn + b_i) &\text{if}\ Wn + b_i\in \CP, \\
 
     0 &\text{otherwise.}
\end{cases} $$
and
$$ f_i(n) = \nu_i(n)1_{A_i}(n). $$
We will show that $f_i,\nu_i$ satisfy the assumptions of Proposition \ref{transference}. Clearly $0 \leq f_i(n) \leq \nu_i(n)$ for every $n$ and so \eqref{majorant} holds trivially. Now we verify the mean value estimate \eqref{meanvalue} on $\|\widehat{f_i}\|_p.$
\begin{lemma}[Mean value estimate]
Suppose that $p > 2$. Then there is an absolute constant $C(p) > 0$ such that  $\|\widehat{f_i}\|_p \leq C(p).$
\end{lemma}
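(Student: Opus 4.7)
The plan is to derive $\|\widehat{f_i}\|_p \leq C(p)$ as an immediate consequence of the restriction theorem for the primes, applied to the pseudorandom majorant $\nu_i$. The construction of $\nu_i$ and $f_i$ here is formally identical to the one in Section 5 of \cite{Shao}, apart from the extra congruence $b_i \equiv 1 \pmod{3}$ which is already absorbed into the choice of $b_i \in \Z_W^*$, so the argument should transfer essentially verbatim.

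First, I would verify that $\nu_i$ satisfies the pseudorandomness hypotheses needed for the restriction theorem. The prefactor $\phi(W)/W$ is exactly the $W$-trick normalization which, together with $b_i \in \Z_W^*$ and the Siegel--Walfisz theorem, gives $\widehat{\nu_i}(0) = 1 + o(1)$. The Fourier decay $|\widehat{\nu_i}(r) - \delta_{r,0}| = o(1)$ uniformly in $r \neq 0$ follows from standard Vinogradov-type exponential sum estimates over primes in a single arithmetic progression, provided $z = z(\delta)$ grows sufficiently slowly with $N$; this simultaneously verifies condition \eqref{decay} of Proposition \ref{transference}.

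Next, I would invoke the restriction estimate for the primes in the form used by Shao (itself a consequence of the Green--Tao/Bourgain restriction machinery): for any pseudorandom majorant $\nu$ on $\Z_N$ satisfying the above conditions and any $g : \Z_N \to \R_{\geq 0}$ with $g \leq \nu$ pointwise, and for every $p > 2$, there is a constant $C(p)$ depending only on $p$ such that $\|\widehat{g}\|_p \leq C(p)$. Applying this with $g = f_i$ and $\nu = \nu_i$ yields the lemma.

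The only point to monitor is that the additional congruence $b_i \equiv 1 \pmod{3}$ does not disturb any of the $W$-trick bookkeeping, but this is automatic: since $3 \mid W$, specifying $b_i \pmod 3$ merely amounts to restricting to one particular reduced residue class modulo $W$ within $\Z_W^*$, and the mean-value and Fourier-decay computations for $\nu_i$ are carried out within a single such class in exactly the same manner as in \cite{Shao}. No new technical input beyond that already developed by Shao is required.
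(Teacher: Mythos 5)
Your proposal is correct and takes essentially the same route as the paper, whose entire proof is a citation of Green's restriction estimate (Lemma 6.6 in \cite{Green}) for functions majorized by the $W$-tricked prime majorant $\nu_i$ — exactly the reduction you perform via the Green--Tao/Bourgain restriction machinery as used by Shao. One small caution: the $L^p$ restriction bound is not a formal consequence of the Fourier decay and mean conditions on $\nu_i$ alone, but relies on the specific sieve-theoretic construction of this majorant, so it is cleaner to cite Green's Lemma 6.6 (or the corresponding lemma in \cite{Shao}) directly rather than phrase it as following from the pseudorandomness hypotheses of Proposition \ref{transference}.
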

\begin{proof}
See Lemma 6.6 in \cite{Green} with exponent $p=3$ (say)
\end{proof}
Now we verify the the Fourier decay property \eqref{decay} of $\nu_i$.
\begin{lemma}[Fourier decay property]
Suppose that $N$ and $z$ are sufficiently large. Then $$\sup_{r \neq 0}|\hat{\nu_i}(r)| \leq \frac{2\log\log z}{z}.$$
\end{lemma}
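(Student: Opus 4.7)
The plan is to unfold the Fourier transform and reduce the claim to a standard Vinogradov-type estimate for linear exponential sums over primes in a fixed residue class modulo $W$, following the $W$-trick strategy of Green \cite{Green}. Writing out the definition of $\nu_i$ and identifying $\Z_N$ with $\{1,\dots,N\}$, for $r \neq 0$ we have
\[
\hat{\nu}_i(r) \;=\; \frac{\phi(W)}{WN}\; e(-\alpha b_i) \sum_{\substack{p \le WN\\ p \equiv b_i\,(W)}} \log(p)\, e(\alpha p), \qquad \alpha := \frac{r}{WN},
\]
where $e(x) = \exp(2\pi i x)$. It therefore suffices to bound the inner exponential sum $S(\alpha)$ by $\frac{WN}{\phi(W)} \cdot \frac{2\log\log z}{z}$ uniformly for $r \neq 0$.

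I would then apply Dirichlet's approximation theorem to obtain $\alpha = a/q + \beta$ with $(a,q) = 1$, $q \leq Q$, and $|\beta| \leq 1/(qQ)$, taking $Q := (WN)/(\log WN)^B$ for a large parameter $B$. On the minor arcs, where $q > (\log WN)^B$, Vaughan's sharpening of Vinogradov's estimate, together with orthogonality of Dirichlet characters modulo $W$ to isolate the residue class $b_i$, gives $|S(\alpha)| \ll WN/(\log WN)^A$ for any $A$, which is entirely negligible against our target. On the major arcs, where $q \leq (\log WN)^B$, largeness of $z$ ensures $q < z$, hence $q \mid W$; expanding $S(\alpha)$ in Dirichlet characters modulo $W$ and applying Siegel--Walfisz to each character sum yields a main term of size $\frac{WN}{\phi(W)\phi(q)q}\,|c_q(r^*)|$ for some nonzero integer $r^*$, plus a negligible contribution from nonprincipal characters. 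Multiplying by $\phi(W)/(WN)$ reduces this to $\frac{|c_q(r^*)|}{\phi(q)q}$, which is controlled by $\frac{2\log\log z}{z}$ via Mertens' product formula $\phi(W)/W \sim e^{-\gamma}/\log z$ and standard estimates on Ramanujan sums combined with $q \leq z$.

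The main obstacle is to extract the explicit constant $2$ in $\frac{2\log\log z}{z}$, rather than an unspecified absolute constant; all of the qualitative decay already follows from the standard $W$-trick machinery. Because the argument is essentially the one carried out by Green \cite{Green} and used by Shao \cite{Shao}, I would follow their treatment closely and track constants step by step. No new analytic input is needed beyond the standard circle method and Mertens-type estimates, so the proof should go through with only minor modifications; the restriction of $\nu_i$'s support to residues $\equiv 1\pmod 3$ only further restricts (and therefore cannot worsen) the relevant cancellation.
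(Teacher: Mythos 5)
The paper disposes of this lemma by simply citing Green's Lemma 6.2 (with the $r=0$ case noted separately via the prime number theorem), and your overall plan --- circle method, Vaughan/Vinogradov on the minor arcs, Siegel--Walfisz on the major arcs --- is indeed the argument underlying that citation. However, your major-arc step contains a genuine error at the claim ``largeness of $z$ ensures $q<z$, hence $q\mid W$.'' First, $z=z(\delta)$ is a fixed constant while your major-arc cutoff $(\log WN)^{B}$ tends to infinity with $N$, so nothing forces $q<z$. Second, and more importantly, the structure is the opposite of what you assert: since $0<r<N$, the frequency is $\alpha=r/(WN)<1/W$, so any rational $a/q$ with $a\ge 1$ lying close to $\alpha$ necessarily has $q\gtrsim W\gg z$; and precisely when $q\mid W$ (or $a=0$) the major-arc main term vanishes identically, because then $\beta WN=r-a(W/q)N$ is a nonzero integer, so $\int_0^{WN}e(\beta t)\,dt=\bigl(e(\beta WN)-1\bigr)/(2\pi i\beta)=0$, leaving only Siegel--Walfisz error terms, which are negligible. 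The genuine source of the $1/z$-type decay is the complementary case: when the part $q_0$ of $q$ coprime to $W$ exceeds $1$, all of its prime factors exceed $z$, so $q_0>z$ and the main term is $O(1/\phi(q_0))$, of size at most roughly $(\log\log z)/z$ (the remaining degenerate denominators are handled by exact cancellations of the kind just described). This structural input --- why a nonzero frequency modulo $N$ forces the relevant denominators to have a large $W$-coprime part or else kills the main term outright --- is exactly what your sketch is missing.

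As a consequence, your closing bound $|c_q(r^*)|/(\phi(q)q)\le 2\log\log z/z$ ``using $q\le z$ and Mertens'' cannot work as written: for small $q\mid W$ (say $q=2$) that quantity is of size $1$, giving no decay whatsoever, and Mertens' formula contributes factors of size $1/\log z$, not $1/z$ or $\log\log z$, so the arithmetic of the endgame does not match the claimed bound either. The argument can be repaired by replacing this step with the analysis above, which is in substance what Green's Lemma 6.2 proves; alternatively, as the paper does, one may simply invoke that lemma directly and treat $r=0$ by the prime number theorem, in which case no new argument is required at all.
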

\begin{proof}
    See Lemma 6.2 in \cite{Green}.
\end{proof}
This establishes \eqref{decay} when $r \neq 0$. The case when $r = 0$ follows from the prime number theorem.

Now we verify the requirement \eqref{mean} on the averages of $f_i$, $i = 1,2,3$. Observe that 
$$ \delta_i = \sum_{n \in \Z_N}f_i(n) = \frac{\phi(W)}{NW}\sum_{\substack{p \in A \cap [1,\frac{2n}{3}] \\ p \equiv b_i \pmod W}} \log{p} = \frac{2n}{3WN}(f(b_i) + \frac{\delta}{8}) \geq \frac{2f(b_i)}{3} + \frac{\delta}{20}.$$
Hence, we may apply Proposition \ref{transference} to the functions $f_i,\nu_i$ to  conclude that, for sufficiently large $z = z(\delta)$,
$$ \sum_{y,z \in \Z_N}f_1(y)f_2(z)f_3(n-y-z) \geq \frac{c(\delta)}{N}$$
for all values of $n \in \Z_N$. Therefore $A_1 + A_2 + A_3 = \Z_N$ and so we are done.
    
\end{proof}

\subsection*{Acknowledgements}
I would like to thank my advisor Xuancheng Shao and Laurence Wijaya for helpful discussions.

\bibliographystyle{plain}
\bibliography{biblio}

\end{document}